\numberwithin{equation}{section}
\setlist[enumerate]{format=\normalfont}
\theoremstyle{plain}
\newtheorem{lemma}{Lemma}[]
\newtheorem{theorem}[lemma]{Theorem}
\newtheorem{corollary}[lemma]{Corollary}
\theoremstyle{definition}
\theoremstyle{remark} 
\newtheorem*{Claim}{Claim}
\newcommand{\xra}{\xrightarrow}
\def\mod{\mathop{\rm mod}\nolimits}
\def\proj{\mathop{\rm proj}\nolimits}
\newcommand{\pd}{\mathrm{pd}}
\def\uHom{\mathop{\underline{\rm Hom}}\nolimits}
\def\Hom{\mathop{\rm Hom}\nolimits}
\def\End{\mathop{\rm End}\nolimits}
\def\Ext{\mathop{\rm Ext}\nolimits}
\def\Tr{\mathop{\rm Tr}\nolimits}
\def\add{\mathop{\rm add}\nolimits}
\def\Cok{\mathop{\rm Cok}\nolimits}
\def\Im{\mathop{\rm Im}\nolimits}
\def\gldim{\mathop{\rm gldim}\nolimits}
\def\grade{\mathop{\mathrm{grade}}\nolimits}
\begin{document}

\title{Noncommutative resolutions using syzygies}

\author[Dao, Iyama, Iyengar, Takahashi, Wemyss, Yoshino]{Hailong Dao, Osamu Iyama, Srikanth B. Iyengar\\ Ryo Takahashi,  Michael Wemyss and Yuji Yoshino}

\address{Hailong Dao, Department of Mathematics, University of Kansas, Lawrence, KS 66045-7523, USA.}
\email{hdao@ku.edu}

\address{Osamu Iyama, Graduate School of Mathematics, Nagoya University, Chikusaku, Nagoya 464-8602, Japan.}
\email{iyama@math.nagoya-u.ac.jp}

\address{Srikanth B. Iyengar, Department of Mathematics, University of Utah, Salt Lake City, UT 84112, USA.}
\email{iyengar@math.utah.edu}

\address{Ryo Takahashi, Graduate School of Mathematics, Nagoya University, Chikusaku, Nagoya 464-8602, Japan.}
\email{takahashi@math.nagoya-u.ac.jp}

\address{Michael Wemyss: School of Mathematics and Statistics, University of Glasgow, 15 University Gardens, Glasgow, G12 8QW,
UK.}
\email{michael.wemyss@glasgow.ac.uk}

\address{Yuji Yoshino, Department of Mathematics, Faculty of Science, Okayama University, Tsushima-Naka 3-1-1, Okayama, 700-8530, Japan.}
\email{yoshino@math.okayama-u.ac.jp}

\begin{abstract}
Given a noether algebra with a noncommutative resolution,  a general construction of new noncommutative resolutions is given.  As an application, it is proved that any finite length module over a regular local or polynomial ring gives rise, via suitable syzygies, to a noncommutative resolution. 
\end{abstract}
\subjclass[2010]{13D05, 14A22, 16G30}
\maketitle

\setcounter{section}{1}
The focus of this article is on constructing endomorphism rings with finite global dimension.   This problem has arisen in various contexts, including Auslander's theory of representation dimension \cite{Auslander}, Dlab and Ringel's approach to quasi-hereditary algebras in Lie theory \cite{DR, CPS}, Rouquier's dimension of triangulated categories \cite{Rouquier}, cluster tilting modules in Auslander--Reiten theory \cite{Iyama2}, and Van den Bergh's noncommutative crepant resolutions  in birational geometry \cite{Vandenbergh}.

For a noetherian ring $R$ which is not necessarily commutative, and a finitely generated faithful $R$-module $M$, the ring $\End_{R}(M)$ is a 
\emph{noncommutative resolution} (abbreviated to NCR) if its global dimension  is finite; see \cite{DaoIyamaTakahashiVial}.  
When this happens, $M$ is said to \emph{give an NCR of $R$}.
We give a method for constructing new NCRs from a given one.

\begin{theorem}
\label{th:main}
Let $R$ be a noether algebra, and let $M,X\in\mod R$. If $M$ is a $d$-torsionfree generator giving an NCR, and $\gldim\End_R(X)$ is finite, then for any integer $0\le c<\min\{d,\grade_R X\}$, the following statements hold.
\begin{enumerate}
\item The $R$-module $M \oplus\Omega^{c}X$ is a $c$-torsionfree generator.
\item There is an inequality
\[
\gldim\End_R(M\oplus\Omega^{c}X)\le2\gldim\End_R(M)+\gldim\End_R(X)+1.
\]
\end{enumerate}
In particular, $M\oplus\Omega^{c}X$ gives an NCR of $R$.
\end{theorem}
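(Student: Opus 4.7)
The proof separates into Part (1), a classical torsionfree computation, and Part (2), the homological bound.

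For (1), since $c<d$, the $d$-torsionfree hypothesis on $M$ automatically yields $c$-torsionfreeness: the Auslander--Bridger condition $\Ext^i_R(\Tr M,R)=0$ for $1\le i\le d$ certainly implies the same vanishing for $1\le i\le c$. The hypothesis $c<\grade_R X$ is equivalent to $\Ext^i_R(X,R)=0$ for $0\le i\le c$, from which a classical theorem of Auslander--Bridger gives that any $c$th syzygy of such a module is $c$-torsionfree; in particular so is $\Omega^c X$. Direct sums of $c$-torsionfree modules are $c$-torsionfree, and $M\oplus\Omega^c X$ remains a generator because $M$ is.

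For (2), set $\Lambda=\End_R(M)$, $\Gamma=\End_R(X)$, $N=\Omega^c X$, and $\Lambda'=\End_R(M\oplus N)$, and let $e\in\Lambda'$ be the idempotent projecting onto the $M$-summand, so $e\Lambda'e\cong\Lambda$. The first step is a standard idempotent/recollement estimate bounding $\gldim\Lambda'$ in terms of $\gldim\Lambda$ and the global dimension of the quotient $\Lambda'/\Lambda'e\Lambda'$, which can be identified with the stable endomorphism ring $\uEnd(N)$ relative to $\add M$. This already absorbs one copy of $\gldim\Lambda$ plus a small constant into the bound.

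The second step is to bound the global dimension of this stable endomorphism ring in terms of $\gldim\Gamma$ and $\gldim\Lambda$. The idea: since $\gldim\Gamma<\infty$, the module $X$ has an $\add X$-resolution of length at most $\gldim\Gamma$, and syzygy shifts convert this into an $\add X$-resolution of $N=\Omega^c X$ of comparable length. Because $M$ is a generator, each term of such a resolution admits an $\add M$-cover; iterating these covers inflates the length by at most $\gldim\Lambda$. The $c$-torsionfree property of $M$ and the grade condition on $X$ are precisely what guarantees that applying $\Hom_R(M\oplus N,-)$ to the resulting complex preserves exactness, so that the $\add M$-resolution lifts to an honest projective resolution over $\Lambda'$.

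The main obstacle is this last point: tracking when $\Hom_R(M\oplus N,-)$ preserves the exactness of the sequences obtained in the $\add M$-approximation process. Here the hypothesis $c<\min\{d,\grade_R X\}$ is used in an essential way, since both the torsionfree level of $M$ and the vanishing of the low $\Ext^i_R(X,R)$ are consumed to keep the relevant connecting maps zero. Combining the two reductions accounts for the bound $2\gldim\Lambda+\gldim\Gamma+1$: one $\gldim\Lambda$ from the idempotent reduction, a second $\gldim\Lambda$ from converting $\add X$-resolutions into $\add M$-resolutions, the term $\gldim\Gamma$ from the initial $\add X$-resolution of $X$, and $+1$ absorbing boundary terms from the syzygy shift.
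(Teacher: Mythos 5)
Your Part (1) is correct and is what the paper means by ``direct verification'': $M$ is automatically $c$-torsionfree since $c<d$; the condition $c<\grade_R X$ forces $\Ext^i_R(X,R)=0$ for $0\le i\le c$, so $\Omega^c X$ is $c$-torsionfree by the Auslander--Bridger characterization; direct sums preserve $c$-torsionfreeness, and the generator property is inherited from $M$.

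Your Part (2), however, has a genuine gap. You correctly reduce via the idempotent $e$, but your ``standard idempotent/recollement estimate'' is stated too loosely to be a theorem: the inequality the paper uses (Auslander--Platzeck--Todorov) has \emph{three} nontrivial terms,
\[
\gldim A \le \gldim(eAe) + \gldim (A/(e)) + \pd_A(A/(e)) + 1,
\]
and there is no version that absorbs $\pd_A(A/(e))$ into ``a small constant.'' More importantly, you never make the key observation: $A/(e)$ is isomorphic \emph{as a ring} to $\End_R(X)$. The paper proves this by noting $A/(e)\cong\End_R(\Omega^c X)/[M]$, then showing that factorizations through $\add M$ coincide with factorizations through $\add R$ (using $d$-torsionfreeness of $M$ and $c<\grade_R X$ via Lemma~\ref{le:stable-isos}), and finally applying the syzygy isomorphism $\uEnd_R(\Omega^c X)\cong\uEnd_R(X)=\End_R(X)$. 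That isomorphism makes $\gldim(A/(e))=\gldim\End_R(X)$ outright, with no extra $\gldim\End_R(M)$ term. Your proposal instead tries to bound $\gldim(A/(e))$ by inflating an $\add X$-resolution into an $\add M$-resolution, which conflates $\gldim(A/(e))$ with $\pd_A(A/(e))$ and, if carried through, would overcount: you would spend one $\gldim\End_R(M)$ in the ``recollement'' step, another on the resolution conversion, \emph{and} you would still owe the $\pd_A(A/(e))$ term, landing at something like $3\gldim\End_R(M)+\gldim\End_R(X)+1$ rather than the stated bound. The correct architecture is: $\gldim(eAe)=\gldim\End_R(M)$, $\gldim(A/(e))=\gldim\End_R(X)$ via the ring isomorphism, and $\pd_A(A/(e))\le\gldim\End_R(M)$ via an $\add M$-resolution of $\Omega^c X$ whose image under $\Hom_R(M\oplus\Omega^c X,-)$ stays exact thanks to Lemma~\ref{le:lift} and the torsionfree/grade hypotheses. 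Your intuition about where the torsionfreeness and grade conditions intervene (keeping exactness after applying $\Hom_R(M\oplus\Omega^c X,-)$) is on the right track, but it applies to the projective dimension term, not to $\gldim(A/(e))$.
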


A commutative ring is \emph{equicodimensional} if every maximal ideal has the same height.  Typical examples of equicodimensional regular rings are polynomial rings over a field, and regular local rings.

\begin{corollary}
\label{co:regular}
Let $R$ be an equicodimensional regular ring, and $N$ a finite length $R$-module such that $\gldim\End_R(N)$ is finite.
Given non-negative integers $c_1,\hdots,c_n$ with $c_{i}<\dim R$ for each $i$, the $R$-module $M:=R\oplus\Omega^{c_1}N\oplus\hdots\oplus \Omega^{c_n}N$ satisfies
\[
\gldim \End_{R}(M)
	\leq 2^{n}\dim R + (2^{n}-1) (\gldim \End_{R}(N)+1).
\]
In particular, $M$ gives an NCR of $R$.
\end{corollary}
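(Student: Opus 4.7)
The plan is to apply Theorem~\ref{th:main} iteratively, adding one syzygy $\Omega^{c_i}N$ at each step, starting from $M_0 = R$.

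As a preliminary, I would verify two points. First, $R$ itself gives an NCR of itself: $\End_R(R) = R$, and $\gldim R = \dim R$ since $R$ is equicodimensional regular; moreover $R$ is a faithful generator and, being free, is $d$-torsionfree for every $d$. Second, since $R$ is Cohen--Macaulay with every maximal ideal of height $\dim R$, and $N$ has finite length so its annihilator is contained only in maximal ideals, one concludes $\grade_R N = \dim R$.

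For the iteration, I would set $M_k := R \oplus \Omega^{c_1}N \oplus \cdots \oplus \Omega^{c_k}N$ and $g_k := \gldim\End_R(M_k)$, and apply Theorem~\ref{th:main} at each step with $M = M_{k-1}$, $X = N$, and $c = c_k$. Since Theorem~\ref{th:main}(1) outputs only a $c_k$-torsionfree module, the exponent used at step $k+1$ must be strictly smaller than $c_k$. I would therefore first reduce to the case of distinct $c_i$ by collapsing repeated summands (which preserves $\gldim\End_R(-)$ up to Morita equivalence) and reorder so that $\dim R > c_1 > c_2 > \cdots > c_n \ge 0$. Setting $c_0 := \dim R$, the hypothesis $c_k < \min\{c_{k-1}, \grade_R N\} = c_{k-1}$ then holds at every step, and Theorem~\ref{th:main}(2) yields
\[
g_k \le 2 g_{k-1} + \gldim\End_R(N) + 1.
\]

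Finally, with $g_0 = \dim R$, induction on $k$ unwinds the recursion to
\[
g_n \le 2^n \dim R + (2^n - 1)\bigl(\gldim\End_R(N) + 1\bigr),
\]
which is the claimed bound. The main point requiring care is the ordering subtlety: the $c$-torsionfree conclusion of Theorem~\ref{th:main}(1) forces a strict decrease in successive parameters, which is what necessitates both the reordering and the Morita-equivalence reduction to distinct $c_i$; once this is set up, the rest is a routine unwinding of the recursion.
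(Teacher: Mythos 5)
Your proposal is correct and follows essentially the same route as the paper: reduce via Morita equivalence to strictly decreasing $c_1 > c_2 > \cdots > c_n$, observe $\grade_R N = \dim R$ from equicodimensionality and finite length, then apply Theorem~\ref{th:main} inductively with $M = M_{k-1}$, $X = N$, $c = c_k$ (using that $M_{k-1}$ is $c_{k-1}$-torsionfree to satisfy the hypothesis $c_k < \min\{c_{k-1}, \grade_R N\}$), and unwind the recursion $g_k \le 2g_{k-1} + \gldim\End_R(N) + 1$ from $g_0 = \dim R$.
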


For any finite length $R$-module $X$, there exists a finite length $R$-module $Y$ such that $\End_R(X\oplus Y)$ has finite global dimension \cite{Iyama}. In the setting of the corollary,  it follows that an NCR can be constructed using any finite length $R$-module.
 
In the definition of noncommutative resolution, it is sometimes required that the module be reflexive \cite{SpV}.  If $\dim R\geq 3$ in the setting of the corollary, then for any finite length $R$-module, by taking all $c_i\geq 2$ it can be ensured that the module giving the NCR is reflexive, but is not free.

\section*{Proofs}\label{proofs}

Throughout, $R$ will be a \emph{noether algebra}, in the sense that it is finitely generated as a module over its centre, and the latter is a noetherian ring.   Thus $R$ is a noetherian ring, and for any $M$ in $\mod R$, the category of finitely generated left $R$-modules, the ring $\End_{R}(M)$ is also a noether algebra, and hence noetherian.

The \emph{grade} of $M\in\mod R$ is defined to be
\[
\grade_{R} M = \inf\{n \mid \Ext^{n}_{R}(M,R)\ne 0\}.
\]
When $R$ is commutative, this is the length of a longest regular sequence in the annihilator of the $R$-module $M$; see, for instance, \cite[Theorem 16.7]{Matsumura86}.

A finitely generated $R$-module $M$ is \emph{$d$-torsionfree}, for some positive integer $d$, if
\[
\Ext_{R}^{i}(\Tr M,R) = 0 \quad\text{for $1\le i\le d$},
\]
where $\Tr M$ be the Auslander transpose of $M$; see~\cite{AuslanderBridger69}.   This is equivalent to the condition that $M$ is the $d$-th syzygy of an $R$-module $N$ satisfying $\Ext_{R}^{i}(N,R)=0$ for $1\le i\le d$; see \cite{AuslanderBridger69}.

Given $R$-modules $X$ and $Y$ we write $\uHom_{R}(X,Y)$ for the quotient of $\Hom_{R}(X,Y)$ by the abelian subgroup of morphisms factoring through projective $R$-modules.

\begin{lemma}
\label{le:lift}
Let $0\to X\to Y\to Z\to0$ be an exact sequence of $R$-modules. If an $R$-module $W$ satisfies $\underline{\Hom}_R(W,Z)=0$, then the following sequence is exact. 
\[
0 \to \Hom_R(W,X) \to \Hom_R(W,Y) \to \Hom_R(W,Z) \to 0
\]
\end{lemma}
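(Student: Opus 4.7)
The plan is to observe that the only nontrivial part is the surjectivity on the right, since the left exactness of $\Hom_R(W,-)$ applied to $0\to X\to Y\to Z\to 0$ already yields exactness of
\[
0 \to \Hom_R(W,X) \to \Hom_R(W,Y) \to \Hom_R(W,Z).
\]
So I would focus on showing that every $R$-linear map $f\colon W\to Z$ lifts to a map $W\to Y$ along the surjection $\pi\colon Y\twoheadrightarrow Z$.

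To do this, I would unpack the hypothesis $\uHom_R(W,Z)=0$: this precisely says that every $f\colon W\to Z$ factors through some projective $R$-module. So write $f = b\circ a$ with $a\colon W\to P$ and $b\colon P\to Z$ for some projective $R$-module $P$. Now use the defining property of $P$: since $\pi\colon Y\to Z$ is surjective, the map $b$ lifts to a map $\tilde b\colon P\to Y$ with $\pi\circ\tilde b = b$. Setting $g := \tilde b\circ a\colon W\to Y$ then yields $\pi\circ g = \pi\circ\tilde b\circ a = b\circ a = f$, as required.

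There is essentially no obstacle here: the proof is a direct diagram chase, combining the definition of $\uHom_R(W,Z)$ with the lifting property of projectives. The only thing to be slightly careful about is that the $\uHom$ quotient is taken modulo maps factoring through \emph{some} projective (not necessarily one appearing naturally in a resolution), but that is exactly what is needed to invoke projective lifting against an arbitrary epimorphism.
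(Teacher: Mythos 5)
Your argument is exactly the paper's: factor $f\colon W\to Z$ through a projective $P$, lift the map $P\to Z$ against the surjection $Y\to Z$ using projectivity, and compose to lift $f$; left-exactness handles the rest. Your write-up simply spells out the diagram chase that the paper compresses into one sentence.
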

\begin{proof}
By hypothesis any morphism $f\colon W\to Z$ factors as $W\to P\xra{f'} Z$, where $P$ is a projective $R$-module, and since $f'$ lifts to $Y$, so does $f$.
\end{proof}

As usual, we write $\Omega X$ for a syzygy of $X$.

\begin{lemma}
\label{le:stable-isos}
Let $X$ and $Y$ be finitely generated $R$-modules.
\begin{enumerate}
\item\label{le:stable-isos 1}
If $\Ext^1_R(X,R)=0$, then there is an isomorphism
\[
\Omega\colon \uHom_R(X,Y)\xra{\ \cong\ }\uHom_R(\Omega X,\Omega Y).
\]
\item\label{le:stable-isos 2} If $0\le c < \grade_{R}X$ and $n\ge 1$, then $\uHom_R(\Omega^{c}X,\Omega^{c+n}Y)=0$.
\end{enumerate}
\end{lemma}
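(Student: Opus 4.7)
The plan for (1) is to construct the inverse of $\Omega$ directly and to verify injectivity by an analogous lifting argument. Fix projective covers $0 \to \Omega X \to P \to X \to 0$ and $0 \to \Omega Y \to Q \to Y \to 0$. For surjectivity, given $g\colon \Omega X \to \Omega Y$, I would compose with the inclusion $\iota\colon \Omega Y \hookrightarrow Q$ and extend the resulting map $\Omega X \to Q$ to some $\tilde g\colon P \to Q$; the obstruction to this extension lies in $\Ext^1_R(X, Q)$, which vanishes because $Q$ is a summand of a free module and $\Ext^1_R(X, R) = 0$. The composite $P \xra{\tilde g} Q \to Y$ annihilates $\Omega X$ and so descends to a morphism $f\colon X \to Y$ whose canonical syzygy realises $g$. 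For injectivity, given a factorization $\Omega f = \beta\alpha$ with $\alpha\colon \Omega X \to N$, $\beta\colon N \to \Omega Y$, and $N$ projective, the same $\Ext$-vanishing extends $\alpha$ to $\tilde\alpha\colon P \to N$; comparing a lift $f_0\colon P \to Q$ of $f$ against $\iota\beta\tilde\alpha$ then shows that $f$ itself factors through $Q$.

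The plan for (2) is induction on $c$ using (1). When $c = 0$, the hypothesis $\grade_R X \ge 1$ forces $\Hom_R(X, R) = 0$, hence $\Hom_R(X, P) = 0$ for every finitely generated projective $P$; since $\Omega^n Y$ with $n \ge 1$ embeds into a projective term of any projective resolution of $Y$, this gives $\Hom_R(X, \Omega^n Y) = 0$, which is stronger than the required stable vanishing. For $c \ge 1$, apply (1) iteratively with $(X, Y)$ replaced by $(\Omega^{j-1} X, \Omega^{n+j-1} Y)$ for $j = 1, \ldots, c$, producing a chain of isomorphisms $\uHom_R(X, \Omega^n Y) \cong \uHom_R(\Omega X, \Omega^{n+1} Y) \cong \cdots \cong \uHom_R(\Omega^c X, \Omega^{c+n} Y)$. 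The hypothesis needed at step $j$ is $\Ext^1_R(\Omega^{j-1} X, R) = 0$, which equals $\Ext^j_R(X, R)$ by dimension shift, and this vanishes since $j \le c < \grade_R X$.

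The main subtlety is verifying that the bijection constructed in (1) genuinely coincides with the syzygy functor on stable Hom, and the associated identification of morphisms factoring through arbitrary projectives with those factoring through the specific projective cover $P$; this is precisely where $\Ext^1_R(X, R) = 0$ is used, and is the only non-formal use of the hypothesis throughout the lemma.
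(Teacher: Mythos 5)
Your proposal is correct and follows essentially the same route as the paper: the paper treats part~(1) as a standard fact (you supply the standard lifting argument in detail), and for part~(2) both you and the paper iterate part~(1) to reduce $\uHom_R(\Omega^{c}X,\Omega^{c+n}Y)$ to $\uHom_R(X,\Omega^{n}Y)$, which vanishes because $\Hom_R(X,R)=0$ and $\Omega^{n}Y$ embeds in a projective. The only quibble is your closing remark that $\Ext^1_R(X,R)=0$ is needed to identify factoring through arbitrary projectives with factoring through the cover $Q\twoheadrightarrow Y$ --- that identification is automatic since any projective maps to $Y$ lift along $Q\to Y$; the hypothesis is used only in the two extension steps across $\Omega X\hookrightarrow P$.
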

\begin{proof}
Part (1) is clear, and implies part (2) for its hypotheses yields
\[
\uHom_R(\Omega^{c}X,\Omega^{c+n}Y)\cong\uHom_R(X,\Omega^{n}Y)
\]
and the right-hand module is zero as $\Hom_R(X,R)=0$ implies $\Hom_{R}(X,\Omega^{n} Y)=0$, since $\Omega^{n} Y$ is a submodule of a projective $R$-module.
\end{proof}

\begin{proof}[Proof of Theorem \ref{th:main}]
Part (1) is a direct verification. \\ 
For part (2),  set $A:=\End_R(M\oplus\Omega^{c}X)$ and let $e\in A$ be the idempotent corresponding to the direct summand $M$. Then $eAe=\End_R(M)$, so given the inequality 
\[
\gldim A\le\gldim(eAe)+\gldim A/(e)+ \pd_A(A/(e)) + 1
\]
proved in \cite{AuslanderPlatzekTodorov96}*{Theorem~5.4}, it remains to prove the two claims below.

\begin{Claim} 
There is an isomorphism of rings  $A/(e)\cong \End_R(X)$.
\end{Claim}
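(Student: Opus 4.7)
The plan is to identify $A/(e)$ with the stable endomorphism ring of $\Omega^{c}X$, and then transport this back to $\End_R(X)$ via the syzygy isomorphisms of Lemma~\ref{le:stable-isos}. For the first step, recall that for any idempotent $e$ in a ring $A$, the natural map $(1-e)A(1-e)\to A/AeA$ is a surjective ring homomorphism with kernel $(1-e)AeA(1-e)$. Here $(1-e)A(1-e)=\End_R(\Omega^{c}X)$, and its kernel is the ideal $J$ of endomorphisms of $\Omega^{c}X$ that factor through $\add M$. So it suffices to prove $\End_R(\Omega^{c}X)/J\cong\End_R(X)$.

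The main obstacle is showing that $J$ coincides with the smaller ideal $\mathcal{P}$ of endomorphisms of $\Omega^{c}X$ that factor through a projective $R$-module. The inclusion $\mathcal{P}\subseteq J$ is immediate from $M$ being a generator, which gives $\add R\subseteq\add M$. For the reverse inclusion, I would use that $M$ is $d$-torsionfree to write $M=\Omega^{d}N$ modulo projective summands, for some $R$-module $N$. Then $\uHom_R(\Omega^{c}X,M)=\uHom_R(\Omega^{c}X,\Omega^{d}N)$, which vanishes by Lemma~\ref{le:stable-isos}(2) applied with $n=d-c\ge 1$---this is exactly where the hypothesis $c<\grade_R X$ is needed. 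Hence every morphism $\Omega^{c}X\to M$ factors through a projective, and so does any composite $\Omega^{c}X\to M^{k}\to\Omega^{c}X$; this gives $J\subseteq\mathcal{P}$.

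With $J=\mathcal{P}$, the quotient $A/(e)$ is identified with $\uEnd_R(\Omega^{c}X)$. Iterating Lemma~\ref{le:stable-isos}(1) $c$ times---whose hypothesis $\Ext^{1}_R(\Omega^{i-1}X,R)=\Ext^{i}_R(X,R)=0$ holds for $1\le i\le c$ because $c<\grade_R X$---gives a ring isomorphism $\uEnd_R(X)\cong\uEnd_R(\Omega^{c}X)$. Finally, $\grade_R X\ge 1$ forces $\Hom_R(X,R)=0$, so no nonzero endomorphism of $X$ factors through a projective, and $\uEnd_R(X)=\End_R(X)$. Composing these identifications yields the desired ring isomorphism $A/(e)\cong\End_R(X)$.
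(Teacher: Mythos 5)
Your proposal is correct and follows essentially the same route as the paper: identify $A/(e)$ with $\End_R(\Omega^{c}X)$ modulo morphisms factoring through $\add M$, use Lemma~\ref{le:stable-isos}(2) (via $M$ being a $d$-th syzygy with $d>c$) to show that this ideal equals the ideal of morphisms factoring through projectives, and then transport $\uEnd_R(\Omega^{c}X)$ back to $\End_R(X)$ by iterating Lemma~\ref{le:stable-isos}(1) and noting $\Hom_R(X,R)=0$. The only difference is cosmetic: you carry out the two identifications in the opposite order and spell out the general $(1-e)A(1-e)/(1-e)AeA(1-e)\cong A/AeA$ fact that the paper merely asserts.
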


Indeed, first note that  $A/(e)=\End_R(\Omega^{c}X)/[M]$, where $[M]$ denotes the two-sided ideal of morphisms factoring through $\add M$.
This does not rely on any special properties of $M$ or of $X$.  

Since $\Hom_{R}(X,R)=0$ one obtains the equality below
\[
\End_R(X)=\underline{\End}_R(X)\cong \underline{\End}_R(\Omega^{c}X),
\]
while the isomorphism is obtained by repeated application of Lemma~\ref{le:stable-isos}\eqref{le:stable-isos 1}, noting that $c<\grade_{R}X$. Therefore, to verify the claim, it is enough to prove $\End_R(\Omega^{c}X)/[M]=\underline{\End}_R(\Omega^{c}X)$, that is, any endomorphism of $\Omega^{c}X$ factoring through $\add M$ factors through $\add R$.  

Given morphisms $\Omega^{c}X\xra{f}M\xra{g}\Omega^{c}X$, the morphism $f$ factors through $\add R$ by Lemma~\ref{le:stable-isos}\eqref{le:stable-isos 2}, since $M$ is a $d$-th syzygy module and $d>c$. This completes the proof of the claim.

\begin{Claim}
There is an inequality $\pd_A(A/(e))\le\gldim\End_R(M)$.
\end{Claim}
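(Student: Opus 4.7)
The goal is to construct a projective $A$-resolution of $A/(e)$ of length at most $g := \gldim \End_R(M)$. Set $\Lambda := eAe = \End_R(M)$.

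Since $eA \subseteq AeA$, left multiplication by $e$ annihilates $A/(e)$, forcing any projective cover into $\add(A(1-e))$. The plan is to take $P_0 := A(1-e)$ with the canonical surjection onto $A/(e)$, giving
\[
0 \to L_1 \to A(1-e) \to A/(e) \to 0, \qquad L_1 \;=\; A(1-e) \cap AeA \;=\; AeA(1-e),
\]
where $L_1$ is the left $A$-span of $eA(1-e) = \Hom_R(\Omega^c X, M)$. Now take a minimal projective $\Lambda$-resolution $Q_\bullet \to eA(1-e) \to 0$ of length $s \le g$, and lift it step by step to a projective $A$-resolution of $L_1$ in $\add(Ae)$: at each stage pick $P_{i+1} \in \add(Ae)$ so that the exact functor $\Hom_A(Ae,-) = e(-)$ recovers the $\Lambda$-projective cover of $eL_i$. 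Exactness of $e(-)$ then inductively gives $eL_i = \Omega^{i-1}_\Lambda(eA(1-e))$, so that $eL_{s+1} = 0$.

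The hard step is twofold: first, upgrading $eL_{s+1} = 0$ into $L_{s+1} = 0$ so the resolution actually terminates; and second, saving the resulting off-by-one so that the total resolution length is $g$ rather than $g+1$. Both are expected to rest on the hypotheses on $M$: the generator condition $R \in \add M$ should ensure each syzygy $L_i$ is generated as a left $A$-module by $eL_i$, so the vanishing of $eL_{s+1}$ propagates to $L_{s+1}$ itself; and the $d$-torsionfree condition with $d > c$, used via the vanishing $\uHom_R(\Omega^c X, M) = 0$ from Lemma~\ref{le:stable-isos}\eqref{le:stable-isos 2}, forces every morphism $\Omega^c X \to M$ to factor through $\add R \subseteq \add M$, which controls the $\add(A(1-e))$-contributions in the successive syzygies. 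A final splicing of the initial $A(1-e)$-step with the top of the lifted $\Lambda$-resolution should then absorb the off-by-one and deliver $\pd_A(A/(e)) \le g$.
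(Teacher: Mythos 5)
Your strategy is essentially the paper's, recast in the language of idempotents: take $A(1-e)$ as the start of a projective resolution of $A/(e)$, resolve the corner bimodule over $\Lambda = eAe = \End_R(M)$, and lift to $\add(Ae)$. However, the two things you flag as the ``hard step'' are precisely the content of the claim, and your proposal leaves them unproved. The functor $Ae\otimes_\Lambda(-)$ is only right exact, so applying it to a $\Lambda$-projective resolution of $eL_1 = eA(1-e)$ does not automatically produce an exact complex of $A$-modules; and the assertion that each $L_i$ is generated by $eL_i$ (so that $eL_{s+1}=0$ would force $L_{s+1}=0$) is true for $L_1$ by definition but not automatic for higher syzygies. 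Saying these ``should rest on'' the generator and torsionfree hypotheses identifies the right ingredients but does not supply the argument.

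What the paper actually does to close this gap is more concrete and worth spelling out. It takes a $\Lambda$-projective resolution of $\Hom_R(M,\Omega^c X)$ of length $\le n := \gldim\Lambda$, and uses the equivalence $\Hom_R(M,-)\colon \add M\to\proj\Lambda$ to realize it as a complex of $R$-modules
\[
0\to M_n\to\cdots\to M_0\to\Omega^c X\to 0,\qquad M_j\in\add M.
\]
Since $R\in\add M$, applying $\Hom_R(R,-)$ recovers the complex itself, so this sequence is \emph{exact} as a sequence of $R$-modules --- this is exactly where the generator hypothesis enters, and it is used at the level of $R$-modules rather than $A$-modules. Then one applies $\Hom_R(M\oplus\Omega^c X,-)$; exactness on the $\Hom_R(M,-)$-component is by construction, and exactness on the $\Hom_R(\Omega^c X,-)$-component is proved by showing that every intermediate syzygy $K_i = \Im(f_i)$ for $i\ge 1$ is a $(c+1)$-st syzygy (using that each $M_j$ is $d$-torsionfree with $d>c$), so Lemma~\ref{le:stable-isos}\eqref{le:stable-isos 2} gives $\uHom_R(\Omega^c X, K_i)=0$ and Lemma~\ref{le:lift} gives the needed short exact sequences. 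Your outline does not contain this argument, and without it neither exactness of the lifted complex nor termination is established. (You should also be wary of the ``final splicing to absorb the off-by-one'': the resolution produced this way genuinely has $n+2$ projective terms, $\Hom(M\oplus\Omega^c X, M_n),\ldots,\Hom(M\oplus\Omega^c X,M_0)$ together with $\Hom(M\oplus\Omega^c X,\Omega^c X)=A(1-e)$, so there is no obvious collapse to length $n$; what makes the final bound in Theorem~\ref{th:main} come out right is the precise form of the Auslander--Platzeck--Todorov inequality, not a shortening of this resolution.)
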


Set $n:=\gldim\End_{R}(M)$. Then, 
the $\End_R(M)$-module $\Hom_R(M,\Omega^{c}X)$ has a finite projective resolution
\begin{equation}
0\to P_n\to \cdots \to P_0 \to\Hom_R(M,\Omega^{c}X) \to 0.\label{eq:proj}
\end{equation}
As $\Hom_R(M,-)\colon\add_RM\to\proj\End_R(M)$ is an equivalence, there is a sequence
\begin{equation}
\label{eq:exact1}
0\to M_n\xra{f_n } \cdots \xra{f_1} M_0 \xra{f_0} \Omega^{c}X \to 0
\end{equation}
of $R$-modules, with $M_j\in\add M$ for all $j$, such that the induced sequence
\[
0 \to \Hom_R(M,M_n) \to \cdots \to \Hom_R(M,M_0) \to \Hom_R(M,\Omega^{c}X) \to 0
\]
is isomorphic to \eqref{eq:proj}. Since $R\in\add M$, the sequence \eqref{eq:exact1} is exact.

To justify the claim, it suffices to prove that the induced complex
\begin{equation}
\label{eq:exact2}
0\to\Hom_R(\Omega^{c}X,M_n)\to \cdots\to \Hom_R(\Omega^{c} X,M_0)\xra{g} \Hom_R(\Omega^{c} X,\Omega^{c}X)
\end{equation}
obtained from \eqref{eq:exact1} is exact, and $\Cok(g)$ is isomorphic to $\End_R(\Omega^{c}X)/[M]\cong A/(e)$. For, then there is a projective resolution 
\begin{eqnarray*}
0\to\Hom_R(M\oplus\Omega^{c}X,M_n)\to&\cdots&\to \Hom_R(M\oplus\Omega^{c} X,M_0)\\
&&\to\Hom_R(M\oplus\Omega^{c} X,\Omega^{c}X)\to A/(e)\to 0
\end{eqnarray*}
of the $A$-module $A/(e)$, as desired.

By construction, one obtains the exact sequence
\[
\Hom_R(\Omega^{c}X,M_0)\xra{\ g\ }\Hom_R(\Omega^{c}X,\Omega^{c}X) \to \End_{R}(\Omega^{c}X)/[M] \to 0.
\]
This justifies the assertion about $\Cok(g)$. As to the exactness, for each $0\le i\le n$ set $K_i:=\Im(f_{i})$, where $f_{i}$ are the maps in \eqref{eq:exact1}. Then there are exact sequences
\[
0\to K_{i+1}\to M_{i} \to K_i \to 0.
\]
For each $i\geq 1$, using the fact that $M_i$ is $d$-torsionfree, and $K_{0}=\Omega^{c}X$, it follows by induction that $K_{i}$ is a $(c+1)$-st syzygy. Lemma~\ref{le:stable-isos}\eqref{le:stable-isos 2} then yields that $\uHom_R(\Omega^{c}X,K_i)=0$ for $i \ge 1$. By Lemma~\ref{le:lift}, one then obtains an exact sequence
\[
0 \to  \Hom_R(\Omega^{c}X,K_{i+1}) \to \Hom_R(\Omega^{c}X,M_{i}) \to  \Hom_R(\Omega^{c}X,K_{i}) \to  0.
\]
Thus the sequence \eqref{eq:exact2} is exact, as desired. 
\end{proof}

Recall that a commutative ring $R$ is \emph{regular} if it is noetherian and every localization at a prime ideal has finite global dimension.  When $R$ is further equicodimensional, the global dimension of $R$ is finite, since it equals $\dim R$.

\begin{proof}[Proof of Corollary \ref{co:regular}]
Up to Morita equivalence, we can assume that
\[
c_{1}  > c_{2}  > \cdots > c_{n-1} > c_{n}.
\]
Set $M_{0}=R$ and for each integer $1\le j \le n$, set
\[
M_{j}:= R\oplus\Omega^{c_1}N\oplus\cdots\oplus \Omega^{c_j}N.
\] 
We prove, by an induction on $j$, that  $M_{j}$ is $c_j$-torsionfree and that 
\[
\gldim \End_{R}(M_{j})\leq 2^{j}\dim R + (2^{j}-1) (\gldim \End_{R}(N)+1).
\]
The base case $j=0$ is a tautology, for $R$ is regular and hence its global dimension equals $\dim R$. Assume the inequality holds for $j-1$ for some integer $j\ge 1$. 

For the induction step, set $M=M_{j-1}$, so that
\[
M_{j} = M_{j-1}\oplus \Omega^{c_j} N.
\]
Since $R$ is equicodimensional, $\grade_R N = \dim R$ and  $M_{j-1}$ is $c_{j-1}$-torsionfree, Theorem~\ref{th:main} applies to yield that $M_{j}$ is $c_{j}$-torsionfree, and further that
\[
\gldim\End_{R}(M_{j})\leq 2 \gldim \End_{R}(M_{j-1}) + \gldim \End_{R}(N) + 1.
\]
Applying the induction hypothesis gives the desired upper bound for the global dimension of $\End_{R}(M_{j})$.
\end{proof}

\subsection*{Acknowledgements} 
This paper was written during the AIM SQuaRE on Cohen--Macaulay representations and categorical characterizations of singularities.  We thank AIM for funding, and for their kind hospitality.  Dao was further supported by NSA H98230-16-1-0012, Iyama by JSPS Grant-in-Aid for Scientific Research 16H03923, Iyengar by NSF grant DMS 1503044,
Takahashi by JSPS Grant-in-Aid for Scientific Research 16K05098, Wemyss by EPSRC grant EP/K021400/1,
and Yoshino by JSPS Grant-in-Aid for Scientific Research 26287008.

\begin{bibdiv}
\begin{biblist}

\bib{Auslander}{book}{
   author={Auslander, M.},
   title={Representation dimension of Artin algebras},
   series={in: Lecture Notes},
   publisher={Queen Mary College, London},
   date={1971},
   }

\bib{AuslanderBridger69}{book}{
   author={Auslander, M.},
   author={Bridger, M.},
   title={Stable module theory},
   series={Memoirs of the American Mathematical Society, No. 94},
   publisher={American Mathematical Society, Providence, R.I.},
   date={1969},
   pages={146},
   review={\MR{0269685}},
   }

\bib{AuslanderPlatzekTodorov96}{article}{
   author={Auslander, M.},
   author={Platzeck, M. I.},
   author={Todorov, G.},
   title={Homological theory of idempotent ideals},
   journal={Trans. Amer. Math. Soc.},
   volume={332},
   date={1992},
   number={2},
   pages={667--692},
   issn={0002-9947},
   review={\MR{1052903 (92j:16008)}},
   doi={10.2307/2154190},
}

\bib{CPS}{article}{
   author={Cline, E.},
   author={Parshall, B.},
   author={Scott, L.},
   title={Finite-dimensional algebras and highest weight categories},
   journal={J. Reine Angew. Math.},
   volume={391},
   date={1988},
   pages={85--99},
   issn={0075-4102},
   review={\MR{961165}},
}

\bib{DaoIyamaTakahashiVial}{article}{
   author={Dao, H.},
   author={Iyama, O.},
   author={Takahashi, R.},
   author={Vial, C.},
   title={Non-commutative resolutions and Grothendieck groups},
   journal={J. Noncommut. Geom.},
   volume={9},
   date={2015},
   number={1},
   pages={21--34},
   issn={1661-6952},
   review={\MR{3337953}},
   doi={10.4171/JNCG/186},}

\bib{DR}{article}{
   author={Dlab, V.},
   author={Ringel, C.~M.},
   title={Every semiprimary ring is the endomorphism ring of a projective
   module over a quasihereditary ring},
   journal={Proc. Amer. Math. Soc.},
   volume={107},
   date={1989},
   number={1},
   pages={1--5},
   issn={0002-9939},
   review={\MR{943793}},
   doi={10.2307/2048026},
}

\bib{Iyama}{article}{
   author={Iyama, O.},
   title={Finiteness of representation dimension},
   journal={Proc. Amer. Math. Soc.},
   volume={131},
   date={2003}, 
   number={4},
   pages={1011--1014},
   issn={1661-6952},
   review={\MR{1948089}},
   doi={10.1090/S0002-9939-02-06616-9},}

\bib{Iyama2}{article}{
   author={Iyama, O.},
   title={Higher-dimensional Auslander-Reiten theory on maximal orthogonal
   subcategories},
   journal={Adv. Math.},
   volume={210},
   date={2007},
   number={1},
   pages={22--50},
   issn={0001-8708},
   review={\MR{2298819}},
   doi={10.1016/j.aim.2006.06.002},
}

\bib{Matsumura86}{book}{
   author={Matsumura, H.},
   title={Commutative ring theory},
   series={Cambridge Studies in Advanced Mathematics},
   volume={8},
   edition={2},
   note={Translated from the Japanese by M. Reid},
   publisher={Cambridge University Press, Cambridge},
   date={1989},
   pages={xiv+320},
   isbn={0-521-36764-6},
   review={\MR{1011461 (90i:13001)}},}

\bib{Rouquier}{article}{
   author={Rouquier, R.},
   title={Dimensions of triangulated categories},
   journal={J. K-Theory},
   volume={1},
   date={2008},
   number={2},
   pages={193--256},
   issn={1865-2433},
   review={\MR{2434186}},
   doi={10.1017/is007011012jkt010},
}

\bib{SpV}{article}{
   author={\v{S}penko, \v{S}.},
   author={Van den Bergh, M.},
   title={Non-commutative resolutions of quotient singularities},
   journal={\sf arXiv:1502.05240},
}

\bib{Vandenbergh}{article}{
   author={Van den Bergh, M.},
   title={Non-commutative crepant resolutions},
   conference={
      title={The legacy of Niels Henrik Abel},
   },
   book={
      publisher={Springer, Berlin},
   },
   date={2004},
   pages={749--770},
   review={\MR{2077594}},
}

\end{biblist}
\end{bibdiv}

\end{document}